\newtheorem{theorem}{Theorem}[section]
\newtheorem{definition}{Definition}[section]
\newtheorem{assumption}{Assumption}[section]
\newtheorem{problem}{Problem}
\newtheorem{lemma}[theorem]{Lemma}
\newtheorem{remark}{Remark}
\title{{\LARGE \bf Risk-averse controller design against data injection attacks on actuators for uncertain control systems}\\
}
\author{
Sribalaji C. Anand$^{1}$ and Andr\'e M. H. Teixeira$^{2}$
\thanks{*This work is supported by the Swedish Research Council under the grant 2018-04396 and by the Swedish Foundation for Strategic Research.}
\thanks{$^{1}$ Sribalaji C. Anand is with the Department of Electrical Engineering, Uppsala University, PO Box 65, SE-75103, Uppsala, Sweden. {\tt\small sribalaji.anand@angstrom.uu.se}}%
\thanks{$^{2}$ Andr\'e M. H. Teixeira is with the Department of Information Technology, Uppsala University, PO Box 337, SE -75105, Uppsala, Sweden. {\tt\small andre.teixeira@it.uu.se}}%
}
\begin{document}
\maketitle
\thispagestyle{empty}
\pagestyle{empty}
\begin{abstract}

In this paper, we consider the optimal controller design problem against data injection attacks on actuators for an uncertain control system. We consider attacks that aim at maximizing the attack impact while remaining stealthy in the finite horizon. To this end, we use the Conditional Value-at-Risk to characterize the risk associated with the impact of attacks. The worst-case attack impact is characterized using the recently proposed output-to-output $\ell_2$-gain (OOG). We formulate the design problem and observe that it is non-convex and hard to solve. Using the framework of scenario-based optimization and a convex proxy for the OOG, we propose a convex optimization problem that approximately solves the design problem with probabilistic certificates. Finally, we illustrate the results through a numerical example.
\end{abstract}
\section{INTRODUCTION}\label{sec_intro}
Cyber-physical systems (CPSs) represent a large class of networked control systems where the physical world and the digital infrastructure are tightly coupled, such as smart cities, autonomous systems, transportation networks, and Internet Of Things. However, the trend towards increased usage of open-standard communication protocols among control systems has made these systems vulnerable to online cyber-attacks such as Stuxnet \cite{BG1}, Industroyer \cite{cherepanov2017industroyer}, etc. Such cyber-attacks can negatively affect the operation of CPS \cite{BG3}. 

Significant work is done in detecting and mitigating cyber-attacks (see \cite{dibaji2019systems,griffioen2019tutorial} and references therein). For instance, \cite{muller2018risk} designs an optimal controller in the presence of covert attacks. The limitation of \cite{muller2018risk} is, it approximates the risk metric Conditional Value-at-Risk (CVaR) empirically using samples, and it parameterizes the controller as a finite family of Finite Impulse Response (FIR) filters. Although these approximations simplify the problem, the validity of these approximations is not discussed except in the asymptotic case i.e., as the number of samples for empirical approximation and the number of FIR filters tends to infinity. 

The article \cite{murguia2020security} proposes and solves two controller design problems. Firstly, it proposes a convex design problem such that the volume of the reachable set of states by the adversary is minimized. Secondly, it proposes a convex design problem that maximizes the Euclidean distance between the set of states reachable by the adversary and the set of critical states. A similar approach was also adopted in \cite{hashemi2020gain}. However, both of these works do not consider an uncertain system. The works \cite{anand2020joint} and \cite{bopardikar2016h} addresses the issue of jointly designing the controller and detector against false data injection (FDI) attacks. However, they also assume a deterministic system.

This paper addresses some of the existing limitations in the literature, by investigating the optimal controller design problem against FDI attacks on actuators for an uncertain control system. To this end, we adopt the following setup. We consider a discrete-time (DT) linear time-invariant (LTI) process with parametric uncertainty, a static output feedback controller, and an anomaly detector. An adversary with perfect system knowledge injects false data into the actuators. In reality, it is hard for the adversary to have perfect system knowledge, but this assumption helps to study the worst-case. The system operator (or the defender) knows only about the bounds of the uncertainty. Under this setup, we present the following contributions.
\begin{enumerate}
    \item Firstly, we formulate the risk-averse design problem. Here, for a given realization of the uncertainty, we use the output-to-output $\ell_2$-gain (OOG) \cite{teixeira2015strategic} to characterize the worst-case impact. We then use the CVaR to characterize the risk associated with the attack impact. The advantages of using the OOG over the classical $H_{\infty}/H_{\_}$  metrics were demonstrated in \cite{anand2020joint}. We also observe that the design problem corresponds to an untractable infinite non-convex optimization problem.
    \item Secondly, extending the results of \cite{dvijotham2014convex}, we derive an upper bound for the OOG. Using this upper bound, we relax the infinite non-convex design problem into an infinite convex design problem.
    \item Finally, by adopting the scenario-based approach \cite{ramponi2018expected}, we modify the infinite convex optimization problem into its sampled counterpart. We also provide probabilistic guarantees on the infinite design problem based on the number of samples used to formulate the sampled optimization problem and the dimension of the controller. The advantage of using scenario-based approach over other approaches is discussed in \cite{campi2018introduction}.
\end{enumerate}

To the best of the author's knowledge, the problem of risk-sensitive controller design for an uncertain control in the finite horizon against FDI attacks has not been addressed in the literature.

The remainder of this paper is organized as follows. Section \ref{sec_PB} describes the problem background. The design problem is formulated in Section \ref{sec_PF}. The problem is relaxed and convexified in Section \ref{sec_convex}. Section \ref{sec_sample} approximates the problem empirically using the scenario-based approach. We illustrate the results using a numerical example in Section \ref{sec_NE}. Finally, we provide concluding remarks in Section \ref{sec_conclusion}.

\section{Problem background}\label{sec_PB}
In this section, we describe the control system structure and the goal of the adversary. Consider the general description of a finite horizon closed-loop DT LTI system with a process ($\mathcal{P}$) with parametric uncertainty, a static output feedback controller ($\mathcal{C}$) and an anomaly detector ($\mathcal{D}$) as shown in Fig. \ref{System}. The closed-loop system is represented by 
\begin{align}
    \mathcal{P}: & \left\{
                \begin{array}{ll}
                    x_p[k+1] &= A^{\Delta}x_p[k] + B \tilde{u}[k]\\
                    y[k] &= Cx_p[k]\\
                    y_p[k] &= C_Jx_p[k]
                \end{array}
                \right. \label{P}\\
    \mathcal{C}: & \left\{
                 \begin{array}{ll}
                         {u}[k] &= Ky[k]
                \end{array}
                \right. \label{C} \\
    \mathcal{D}: & \left\{
                \begin{array}{ll}
                \hat{x}_p[k+1] &= A\hat{x}_p[k] + Bu[k] +Ly_r[k]\\
                y_r[k] &= y[k] - C\hat{x}_p[k], \; k=0,\dots,N_h-1.
                \end{array}
                \right. \label{D}
\end{align} 

Here $A^{\Delta} \triangleq A + \Delta A(\delta)$ with $A$ representing the nominal system matrix and {\color{black}$\delta \in \Omega$ denoting the probabilistic parameter uncertainty with probability space $(\Omega, \mathcal{D}_a, \mathbf{P})$}. We assume the uncertainty set $\Omega\subset \mathbb{R}^v$ to be closed, bounded, and to include the zero uncertainty yielding $\Delta A(0) = 0$. The state of the process is represented by $x_p[k]$, the output of the process is $y[k]  $, $\tilde{u}[k]$ is the control signal received by the process, $ u[k]$ is the control signal generated by the controller, $y_p[k]$ is the virtual performance output, and $y_r[k]$ is the residue generated by the detector. In this paper, we assume all signals have the same dimension $n_x$. That is, we consider a fully actuated square system. The system is said to have a good performance over the horizon $N_h$, when the energy of the performance output ($||y_p||_{\ell_2, [0,N_h]}^2$) is small. In the closed-loop system described above, we consider that an adversary is injecting false data into the actuators. An attack is said to be detected when the energy of the detection output ($||y_r||_{\ell_2, [0,N_h]}^2$) is higher than a predefined threshold (say $\epsilon_r$). We assume that the detection threshold and the detector $L$ is designed to be robust against all uncertainties.

Given this setup, we now discuss the resources the adversary has access to. For clarity, we establish the following:
\begin{assumption}\label{ass_contr}
$(A^\Delta,B)$ is controllable $\forall \delta\in\Omega$. $\hfill\triangleleft$
\end{assumption}
\begin{assumption}
Matrices $B$ and $C$ are invertible. $\hfill\triangleleft$
\end{assumption}
\begin{figure}
    \centering
    \includegraphics[width=8.4cm]{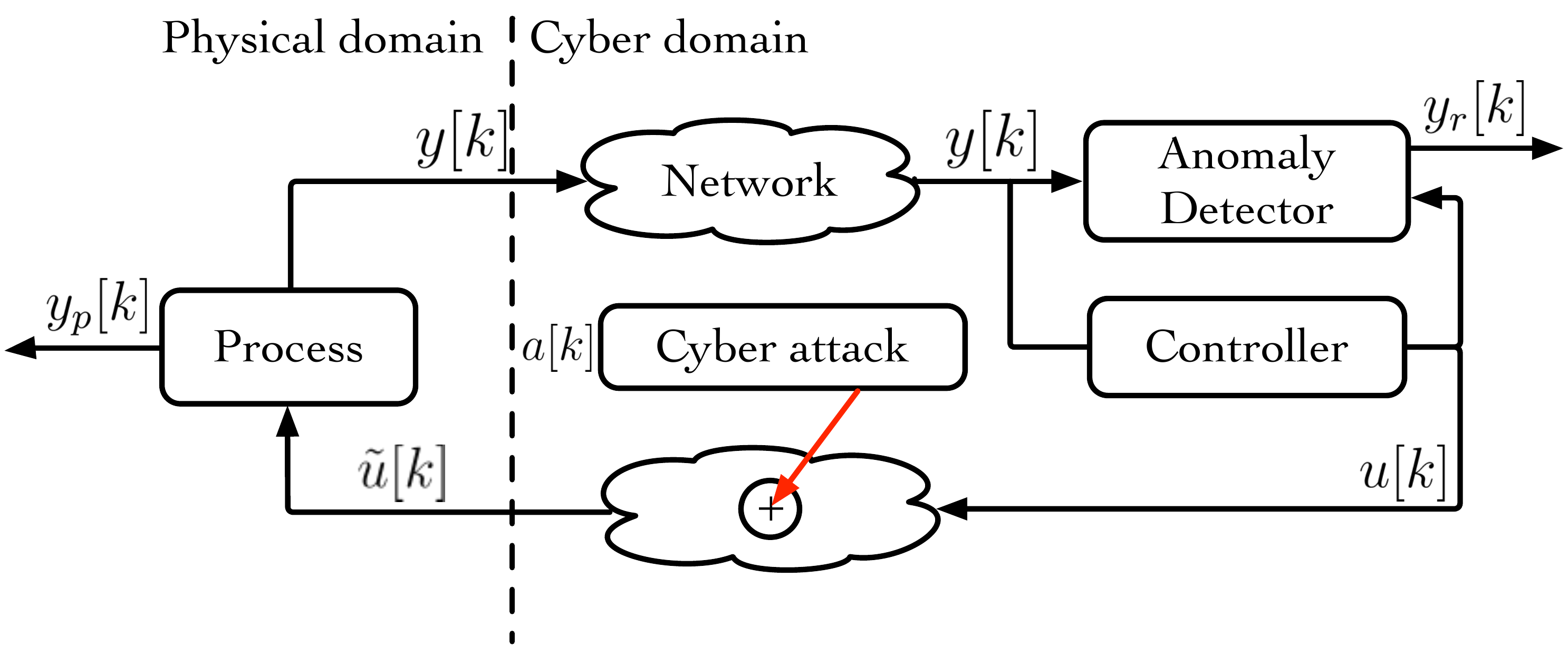}
    \caption{Control system under data injection attack on actuators}
    \label{System}
\end{figure}
\subsection{Disruption and disclosure resources}\label{disclosure:sec} 
The adversary can access the control channels and inject data. This is represented by $\tilde{u}[k]={u}[k]+a[k],$ where $a[k] \in \mathbb{R}^{n_x}$ is the data injected by the adversary. The adversary cannot access the sensor channels. The adversary does not have access to any disclosure (eavesdropping) resources.
\subsection{System knowledge} 
We assume that, at design time, the defender knows the bounds of the set $\Omega$ and that the system matrix $A^{\Delta}$ is known only up to the nominal system matrix $A$. Next, at operation time, we assume that the adversary has full system knowledge. That is, the adversary knows the system matrix $A^{\Delta}$ without any uncertainties. In reality, it is hard for the adversary to know the system matrices, but this assumption helps to study the worst case.

The system knowledge is used by the adversary to calculate the optimal data injection attacks. Defining $e[k] \triangleq x_p[k]-\hat{x}_p[k]$ and ${x}[k] \triangleq [ x_p[k]^T \; e[k]^T]^T$, the closed-loop system under attack with the performance output and detection output as system outputs becomes
\begin{equation}\label{system:CL}
    \mathcal{P}_{cl}:\left\{
                \begin{array}{ll}
                            {x}[k+1] &= {A}_{cl}^{\Delta}{x}[k] + {B}_{cl}a[k]\\
                            y_p[k] &= {C}_p{x}[k]\\
                            y_r[k] &= {C}_r{x}[k],\\
                \end{array}
                \right.
\end{equation}
\begin{align*}
\text{where} \;\; &{A}_{cl}^{\Delta} \triangleq \begin{bmatrix}
    A^{\Delta}+BKC & 0\\
    \Delta A & A-LC
    \end{bmatrix}, & {B}_{cl} \triangleq \begin{bmatrix}
    B \\ B 
    \end{bmatrix},\\
    &{C}_p \triangleq \begin{bmatrix}
    C_J & 0
    \end{bmatrix}, & {C}_r \triangleq \begin{bmatrix}
    0 & C
    \end{bmatrix}.
\end{align*}
\subsection{Attack goals and constraints}
Given the resources the adversary has access to, the adversary aims at disrupting the system's behavior whilst remaining stealthy. The system disruption is evaluated by the increase in energy of the performance output, and the attack signal is deemed to be stealthy when the energy of the detection output is less than $\epsilon_r$. Next, we discuss the optimal attack policy of the adversary and the design problem of the defender when the system is deterministic.
\subsection{Design for a deterministic system}
From the previous discussions, it can be understood that the goal of the adversary is to maximize the energy of the performance output whilst remaining stealthy. When the system is deterministic
($\Omega=\{0\}$), the attack policy of the adversary can be formulated as
\begin{equation}\label{opti_oog_primal}
\begin{aligned}
q(K,0)  \triangleq \sup_{a\in \ell_{2e}}\quad  & \Vert y_p(K,0) \Vert_{\ell_2}^2 \\
\textrm{s.t.} \quad & \Vert y_r(K,0) \Vert_{\ell_2}^2 \leq \epsilon_r,\; x(K,0)[0]=0,
\end{aligned}
\end{equation}
where the subscript $[0,N_h]$ is dropped for clarity. In \eqref{opti_oog_primal}, $q(K,0)$ is the disruption caused by the attack signal on the nominal system, $y_p(K,0)$ and $y_r(K,0)$ are the performance output and the detection output under the given controller $K \in \mathbb{R}^{n_x \times n_x}$, and $N_h$ is the horizon length. In \eqref{opti_oog_primal}, the constraint $x(K,0)[0]=0$ is introduced since the system is at equilibrium before the attack commences.
\begin{assumption}
The system \eqref{system:CL} is at equilibrium before the attack commences. $\hfill\triangleleft$
\end{assumption}

The aim of the defender then is to design a controller $K$ such that the disruption caused by the adversary ($q(K,{\color{black} 0})$) is minimized. To this end, the design problem can be formulated as
\begin{equation}\label{dummy1}
{\color{black}K^* = \arg \inf_{K} q(K,0)}
\end{equation}

\color{black}{The design problem \eqref{dummy1} is optimal only when \eqref{system:CL} is deterministic}\color{black}{}. By extending \eqref{dummy1}, we formulate the design problem when the system is uncertain in the next section.

\section{Problem Formulation}\label{sec_PF}
Consider the data injection attack scenario where the parametric uncertainty $\delta \in \Omega$ of the system is known to the adversary but not to the defender. The defender knows only about the probabilistic description of the set $\Omega$. In reality, it is hard for the adversary to know the system matrices, but this assumption helps to study the worst case. Under this setup, the adversary can cause high disruption by remaining stealthy as it will be able to inject attacks by solving
\begin{equation}\label{eqq4}
\begin{aligned}
q(K,\delta)  \triangleq \sup_{a\in \ell_{2e}} \quad & \Vert y_p(K,\delta) \Vert_{\ell_2}^2 \\
\textrm{s.t.} \quad & \Vert y_r(K,\delta) \Vert_{\ell_2}^2 \leq \epsilon_r,\;x(K,\delta)[0]=0,
\end{aligned}
\end{equation}
where $y_p({K,\delta})$ and $y_r({K,\delta})$ are the performance and detection output corresponding to the controller $K$ and uncertainty $\delta$. Since the defender does not know the system completely, $q(K,\delta)$ becomes a random variable. Thus, from the defenders point of view, the best option is to choose a feedback policy $K$, such that the risk corresponding to the impact random variable $q(K,\delta)$ is minimized. This design problem can be formulated as \textit{Problem \ref{problem_0}}.
\begin{problem}\label{problem_0}
Find an optimal feedback controller $K^*$ s.t:
\begin{equation}
K^* \triangleq \arg \inf_{K} \mathcal{R}_{\Omega}(q(K,\delta)),
\end{equation}
where $\mathcal{R}_{\Omega}$ is a risk metric chosen by the defender. The subscript $\Omega$ denotes that the risk acts on the uncertainty whose probabilistic description is known to the defender. 
$\hfill\triangleleft$
\end{problem}

\textit{Problem \ref{problem_0}} searches for a controller $K$ such that the risk is minimized. Let us consider the setup where the defender evaluates the risk based on the risk metric CVaR. CVaR is used in the research community due to its numerous advantages \cite{rockafellar2000optimization} and is defined in \textit{Definition \ref{def_Var}}.
\begin{definition}[CVaR \cite{ramponi2018expected}]\label{def_Var}
Given a random variable $X$ and $\alpha \in (0,1)$, the CVaR is defined as \footnote{\color{black}{This Definition assumes the distribution of $X$ has no point masses. For general definitions of CVaR see \cite{rockafellar2002conditional}.}}
\[
\text{CVaR}_{\alpha}(X) = \mathbb{E}\{X|X > \text{VaR}_{\alpha}(X)\},
\]
\[\text{where} \quad \quad 
\text{VaR}_{\alpha}(X) = \inf\{x|\mathbb{P}[X \geq x] \leq \alpha\}.
\]
$\text{CVaR}_{\alpha}(X) = \beta$ implies that $X \leq \beta$ at least $\alpha \times 100\%$ of the time on average. $\hfill\triangleleft$
\end{definition}

In our setting, the defender is interested in determining the controller such that the $\text{CVaR}_{\alpha}$ (given $\alpha$) of the impact random variable ($q(K,\delta)$) is minimized. To this end, \textit{Problem 1} can be reformulated as
\begin{equation}\label{problem_1}
K^* = \arg\inf_{K} \mathbb{E}_{\Omega}\{q(K,\delta)|q(K,\delta) > \text{VaR}_{\alpha}(q(K,\delta))\}.
\end{equation}

There are two difficulties in solving \eqref{problem_1}. Firstly, \eqref{eqq4} is non-convex for any given $\delta$. Secondly, since the operator $\mathbb{E}$ operates over the continuous space $\Omega$, the optimization problem \eqref{problem_1} is computationally intensive and in general NP-hard. To this end, in \textit{Section \ref{sec_convex}}, we determine a convex approximation for \eqref{eqq4}. We then use this approximation, to recast \eqref{problem_1} as a convex optimization problem. In \textit{Section \ref{sec_sample}}, we provide a method to approximate the expectation operator.

\section{Design problem formulation using a convex Impact proxy}\label{sec_convex}
In this section, we consider the function $q(K,\delta_j)$ for a given uncertainty $\delta_j \in \Omega$ and prove that it has an upper bound. {\color{black}We then show that the term of the upper bound that is dependent on the controller (say $\bar{q}(\cdot)$) is convex in $K$. The main objective of performing this step is that, once we determine the term $\bar{q}(\cdot)$, it can be used in \eqref{problem_1} instead of $q(K,\delta_j)$ to formulate a relaxed convex design problem. To this end, we will refer to $\bar{q}(\cdot)$ as \textit{Impact proxy} in the reminder of the paper.}

To derive the upper bound, we begin by defining the vectors
$\textbf{a}_j \triangleq \begin{bmatrix}
a_j[0]^T,\dots,a_j[N_h-1]^T
\end{bmatrix}^T$, $\textbf{x}_{p,j} \triangleq \begin{bmatrix}
x_{p,j}[1]^T,\dots,x_{p,j}[N_h]^T
\end{bmatrix}^T$, $\textbf{e}_{j} \triangleq \begin{bmatrix}
e_{j}[1]^T,\dots,e_{j}[N_h]^T
\end{bmatrix}^T$, $\textbf{y}_{p,j} \triangleq \begin{bmatrix}
y_{p,j}[1]^T,\dots,y_{p,j}[N_h]^T
\end{bmatrix}^T$, and $\textbf{y}_{r,j} \triangleq \begin{bmatrix}
y_{r,j}[1]^T,\dots,y_{r,j}[N_h]^T
\end{bmatrix}^T$. Here $\textbf{a}_{j}, \textbf{x}_{p,j}, \textbf{y}_{p,j}$ and $\textbf{y}_{r,j}$ are the stacked attack vector, system state, performance output vector and the detection output vectors  corresponding to the uncertainty $\delta_j$ respectively. Let us define the matrices $F_{xa}(K,\delta_j), F_{ea}(\delta_j), F_{ex}(\delta_j) \in \mathbb{R}^{n_xN_h \times n_xN_h}$, such that 
\begin{align}
    \textbf{x}_{p,j} &= F_{xa}(K,\delta_j)\textbf{a}_j,\; \textbf{e}_j = F_{ea}(\delta_j)\textbf{a}_j  + F_{ex}(\delta_j)\textbf{x}_{p,j}, \\
    \textbf{y}_{p,j}&= F_{p}(K,\delta_j)\textbf{a}_j, \;\; \textbf{y}_{r,j} = F_{r}(K,\delta_j)\textbf{a}_j,\\
    F_{p}&(K,\delta_j) \triangleq (I_{N_h} \otimes C_J) F_{xa}(K,\delta_j),\\
    F_{r}&(K,\delta_j) \triangleq (I_{N_h} \otimes C) (F_{ea}(\delta_j)+F_{ex}(\delta_j)F_{xa}(K,\delta_j)).
\end{align}

Under the uncertainty $\delta_j$, let us represent the system matrix of \eqref{P} by $A_j$. Then $F_{\upsilon a}(K,\delta_j), \upsilon = \{x,e\}$ is given by
\[
\begin{bmatrix}
B & 0 & \dots & 0\\
{A}_{\upsilon,j}B& B  & \dots & 0\\
\vdots & \vdots & \ddots & \vdots\\
{A}_{\upsilon,j}^{N_h-1}B & {A}_{\upsilon,j}^{N_h-2}B  & \dots & B
\end{bmatrix},
\]
where ${A}_{x,j} \triangleq A_j+BKC$ and $A_{e,j} \triangleq A_j-LC$. Similarly $F_{ex}(\delta_j)$ is given by
\begin{align}
&\begin{bmatrix}
0 & 0 & \dots & 0\\
\Delta A & 0  & \dots & 0\\
\vdots & \vdots & \ddots & \vdots\\
{A}_{e,j}^{N_h-2}\Delta A & {A}_{e,j}^{N_h-3}B \Delta A  & \dots & 0
\end{bmatrix}
\end{align}
Under these definitions, $q(K,\delta_j)$ can be obtained by the non-convex optimization problem 
\begin{equation}\label{e1}
\begin{aligned}
q(K,\delta_j)\triangleq\sup_{\textbf{a}_j}\;&\Vert F_p(K,\delta_j) \textbf{a}_j{\color{black}\Vert_{2}^2}\\
\textrm{s.t.}\;&\Vert F_r(K,\delta_j)\textbf{a}_j{\color{black}\Vert_{2}^2}\leq\epsilon_r, x(K,\delta_j)[0]=0.
\end{aligned}
\end{equation}
Next, we derive the upper bound of $q(K,\delta_j)$ in \textit{Lemma \ref{lem2}}.
\begin{lemma}\label{lem2}
Let $\delta_j \in \Omega$ and $\kappa \triangleq F_p(\cdot)F_r^{-1}(\cdot)$ (Here the arguments of $F_p$ and $F_r$ are dropped for clarity). Let the matrix $B$ be invertible. Then, it holds that
\begin{equation}\label{f1}
q(K,\delta_j) \leq \mu \bar{q}(K,\delta_j)^f,\;\bar{q}(K,\delta_j) \triangleq\eta||K||_F^2+ \sum_{i=2}^{n_xN_h}\sigma_i(\kappa^{-1}),
\end{equation}
where $f \triangleq n_xN_h-1, \mu$ is a term independent of $K$ and {\color{black} $\eta$ is a positive scalar weight on the regularization term.}
\end{lemma}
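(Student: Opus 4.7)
The plan is to turn the supremum in \eqref{e1} into a singular-value problem by a change of variables, bound the resulting largest singular value of $\kappa$ using a determinant identity together with the AM--GM inequality, and finally verify that the leftover $K$-independent prefactor can be absorbed into $\mu$. First I would use that $B$ is invertible (together with the implicit invertibility of $C$ and $C_J$ required to make sense of $\kappa^{-1}$) to change variables $\mathbf{b}=F_r(K,\delta_j)\mathbf{a}_j$ in \eqref{e1}. The constraint becomes $\|\mathbf{b}\|_2^2\leq\epsilon_r$ and the objective becomes $\|\kappa\,\mathbf{b}\|_2^2$, so that
\[
q(K,\delta_j)=\epsilon_r\,\sigma_{\max}(\kappa)^2=\frac{\epsilon_r}{\sigma_{\min}(\kappa^{-1})^2},
\]
where I have used the standard reciprocal identity $\sigma_{\max}(\kappa)\,\sigma_{\min}(\kappa^{-1})=1$.

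Next, I would isolate the smallest singular value of $\kappa^{-1}$ through the determinant identity $\prod_{i=1}^{n_xN_h}\sigma_i(\kappa^{-1})=|\det(\kappa^{-1})|$. Indexing the singular values so that $\sigma_1$ is the smallest yields
\[
\frac{1}{\sigma_1(\kappa^{-1})}=|\det(\kappa)|\prod_{i=2}^{n_xN_h}\sigma_i(\kappa^{-1}),
\]
and applying AM--GM to the remaining $f=n_xN_h-1$ non-negative factors bounds the product by $\bigl((1/f)\sum_{i=2}^{n_xN_h}\sigma_i(\kappa^{-1})\bigr)^{f}$, which after squaring and regrouping produces the $f$-th power of $\sum_{i=2}^{n_xN_h}\sigma_i(\kappa^{-1})$ appearing in \eqref{f1}.

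Third, I would argue that $\det(\kappa)$ is independent of $K$, so the prefactor can be renamed $\mu$. Because $F_{xa}(K,\delta_j)$ is block lower triangular with diagonal blocks $B$, one has $\det(F_{xa})=\det(B)^{N_h}$. Since $F_{ex}(\delta_j)$ has zero diagonal blocks, $F_{ex}F_{xa}$ is strictly block lower triangular, so $F_{ea}+F_{ex}F_{xa}$ is block lower triangular with diagonal blocks $B$ as well. Hence $\det(F_p)=\det(C_J)^{N_h}\det(B)^{N_h}$ and $\det(F_r)=\det(C)^{N_h}\det(B)^{N_h}$, and the ratio $\det(\kappa)=\det(F_p)/\det(F_r)$ depends only on $B$, $C$, $C_J$, and $N_h$. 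Finally, since each $\sigma_i(\kappa^{-1})\geq 0$ and $\eta\|K\|_F^2\geq 0$, adding the regularizer only enlarges the right-hand side, $\sum_{i=2}^{n_xN_h}\sigma_i(\kappa^{-1})\leq\bar{q}(K,\delta_j)$, and monotonicity of $x\mapsto x^f$ on $\mathbb{R}_+$ closes the inequality $q(K,\delta_j)\leq\mu\,\bar{q}(K,\delta_j)^f$. The role of $\eta\|K\|_F^2$ is not to tighten the bound but to make $\bar{q}$ strongly convex in $K$, as needed in Section \ref{sec_sample}.

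The main obstacle I expect is the bookkeeping between singular values and their squares so that the stated exponent $f$ exactly matches the one produced by AM--GM: a naive application to $\sigma_i$ gives exponent $2f$, so the argument must either apply AM--GM to $\sigma_i(\kappa^{-1})^2$ or use a tighter regrouping that hides a square inside $\mu$. A secondary subtlety is making explicit the invertibility hypotheses on $C$ and $C_J$ that are only implicit in the definition of $\kappa^{-1}$, and checking that the block-triangular determinant computation is genuinely independent of $K$ despite the $K$-dependence of the off-diagonal blocks of $F_{xa}$.
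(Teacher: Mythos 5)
Your proposal follows essentially the same route as the paper: reduce \eqref{e1} to $\sigma_{\max}(\kappa)$ by a change of variables (equivalently, the zero-gap dual \eqref{e2}--\eqref{e4}), use $\sigma_{\max}(\kappa)=1/\sigma_{\min}(\kappa^{-1})$ together with $\prod_i\sigma_i(\kappa^{-1})=|\det(\kappa^{-1})|$ to trade the smallest singular value for the product of the remaining ones, bound that product by AM--GM to produce $\bigl(\sum_{i}\sigma_i(\kappa^{-1})\bigr)^{f}$, and absorb the determinant into $\mu$ because the block lower-triangular structure of $F_{xa}$ and $F_{ea}+F_{ex}F_{xa}$ (diagonal blocks $B$) makes $\det(F_p)$ and $\det(F_r)$ independent of $K$ -- this is exactly the mechanism behind the $\det(B)$ prefactor in Lemma \ref{lem_oog_1}, and your observation that the regularizer $\eta\|K\|_F^2$ enters only by monotonicity, to enforce strong convexity for Theorem \ref{lem3}, is also the paper's reading. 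One caveat on the wrinkle you flagged: your honesty about the exponent is warranted, but neither of your proposed repairs actually closes it. With $q$ defined via squared norms as in \eqref{eqq4}, the argument delivers $q\leq\mu'\bar{q}^{2f}$; applying AM--GM to $\sigma_i(\kappa^{-1})^2$ and then bounding $\sum_i\sigma_i^2\leq(\sum_i\sigma_i)^2$ still yields exponent $2f$, and a square cannot be hidden inside $\mu$ since $\bar{q}$ depends on $K$ (nor does the $K$-independent lower bound $\sum_{i\geq 2}\sigma_i(\kappa^{-1})\geq f\,|\det(\kappa^{-1})|^{1/(n_xN_h)}$ let you trade $\bar{q}^{2f}$ for $\bar{q}^{f}$). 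The paper exhibits the same square-dropping in the parallel OOG derivation, where the dual \eqref{e2}--\eqref{e3} produces $\gamma=\sigma_{\max}(\kappa)^2$ but \eqref{e4} and Lemma \ref{lem_oog_1} state the unsquared $\sigma_{\max}(\kappa)$; the statement \eqref{f1} with exponent $f$ holds if $q$ is read at the gain level ($\sqrt{q}$), and in any case the discrepancy is immaterial downstream, since the paper only uses that $\bar{q}\mapsto\bar{q}^{p}$ is monotone on $\bar{q}>0$ for some positive power $p$ before optimizing $\bar{q}$ itself.
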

\begin{proof}
See Appendix.
\end{proof}

In \textit{Lemma \ref{lem2}}, we formulated an upper bound for ${q}(K,\delta_j)$. However, only the term $\bar{q}(K,\delta_j)$ of the bound is dependent on the variable $K$. Moreover,  since $\left(\bar{q}(K,\delta_j)\right)^{n_xN_h-1}$ is a monotonically increasing function on $\bar{q}(K,\delta_j)>0$, $\bar{q}(K,\delta_j)$ can be replaced as the term to be optimized. Next, we show that $\bar{q}(K,\delta_j)$ is {\color{black}strongly} convex in the design variable $K$. 
\begin{theorem}\label{lem3}
For any given $\delta_j \in \Omega$, the function $\bar{q}(K,\delta_j)$ is {\color{black}strongly} convex in the design variable $K$.
\end{theorem}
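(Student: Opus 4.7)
The plan is to split $\bar{q}(K,\delta_j)$ into two pieces and handle each separately. The first piece, $\eta\|K\|_F^2$, is quadratic with Hessian $2\eta I$, hence strongly convex in $K$ with modulus $2\eta>0$. The second piece, $g(K)\triangleq \sum_{i=2}^{n_xN_h}\sigma_i(\kappa^{-1}(K,\delta_j))$, I will only need to show is convex in $K$, since the sum of a strongly convex function and a convex function is strongly convex with the same modulus.

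To prove convexity of $g$, my approach is to express it as the composition of a convex matrix function with an affine map in $K$. First I would rewrite
\[
\kappa^{-1}=F_r F_p^{-1}=(I_{N_h}\otimes C)\bigl(F_{ea}F_{xa}^{-1}+F_{ex}\bigr)(I_{N_h}\otimes C_J)^{-1},
\]
so that the entire $K$-dependence is concentrated in $F_{xa}^{-1}$. Next, exploiting the block-Toeplitz structure, I would factor $F_{xa}=\mathcal{M}_x(I_{N_h}\otimes B)$, where $\mathcal{M}_x$ is block lower triangular with blocks $A_{x,j}^{i-j}$. Letting $\mathcal{N}_x$ denote the block matrix carrying $A_{x,j}=A_j+BKC$ on its first sub-diagonal, nilpotency $\mathcal{N}_x^{N_h}=0$ gives the Neumann identity $\mathcal{M}_x=(I-\mathcal{N}_x)^{-1}$, so $\mathcal{M}_x^{-1}=I-\mathcal{N}_x$. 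Combined with the invertibility of $B$ assumed in Lemma \ref{lem2},
\[
F_{xa}^{-1}=(I_{N_h}\otimes B^{-1})(I-\mathcal{N}_x),
\]
which is affine in $K$. Hence $\kappa^{-1}(K,\delta_j)$ is an affine function of $K$.

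For the convex part, I would identify the sum with a Ky Fan norm. With the ordering of singular values $\sigma_1\leq\sigma_2\leq\cdots\leq\sigma_{n_xN_h}$ that is consistent with the determinant/AM--GM derivation of the bound in Lemma \ref{lem2} (where the smallest singular value is absorbed into the denominator via $\det(\kappa^{-1})$), $\sum_{i=2}^{n_xN_h}\sigma_i(M)$ coincides with the Ky Fan $(n_xN_h-1)$-norm of $M$, i.e.\ the sum of its $n_xN_h-1$ largest singular values. Ky Fan norms are unitarily invariant matrix norms, hence convex functions on $\mathbb{R}^{n_xN_h\times n_xN_h}$. Pre-composing this norm with the affine map $K\mapsto \kappa^{-1}(K,\delta_j)$ preserves convexity, so $g$ is convex; adding $\eta\|K\|_F^2$ yields strong convexity of $\bar{q}(\cdot,\delta_j)$ with modulus $2\eta$.

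The main obstacle is making the $K$-dependence of $\kappa^{-1}$ transparent. A priori, $K$ enters $F_{xa}$ through the matrix powers $(A_j+BKC)^k$, and convexity of $g$ is not at all evident at that level. The reduction to an affine dependence hinges crucially on the nilpotency of $\mathcal{N}_x$ and on the invertibility of $B$: without the Neumann identity $\mathcal{M}_x=(I-\mathcal{N}_x)^{-1}$, the inverse $F_{xa}^{-1}$ would appear as a complicated rational expression in $K$, and composition with the Ky Fan norm would no longer deliver convexity.
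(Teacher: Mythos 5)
Your proof is correct and takes essentially the same route as the paper's: the paper (see the analogous argument sketched for Lemma \ref{lem_oog_2}, following Theorem V.1 of the cited Dvijotham et al.\ reference and the convexity of singular-value functions from the cited Lewis reference) likewise reduces the $K$-dependence of $\kappa^{-1}$ to an affine one through the inverse of the block-Toeplitz matrix $F_{xa}$, composes with a convex sum of singular values, and obtains strong convexity from the regularizer $\eta\Vert K\Vert_F^2$. Your explicit observations --- the Neumann/nilpotency identity giving $F_{xa}^{-1}=(I_{N_h}\otimes B^{-1})(I-\mathcal{N}_x)$ under the invertibility of $B$ from Lemma \ref{lem2}, and the reading of $\sum_{i=2}^{n_xN_h}\sigma_i(\kappa^{-1})$ as the Ky Fan sum of the $n_xN_h-1$ \emph{largest} singular values (the ordering consistent with the AM--GM step behind Lemma \ref{lem2}, without which the sum would not be convex) --- are exactly the details the paper's argument relies on.
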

\begin{proof}
See Appendix.
\end{proof}

We have shown in this section that the term $\bar{q}(K,\delta_j)$ can be used as the convex proxy objective function for the attack impact ${q}(K,\delta_j)$. 
That is, we can recast \eqref{problem_1} as
\begin{equation}\label{problem_2}
K^* = \arg\inf_{K} \mathbb{E}\{\bar{q}(K,\delta)|\bar{q}(K,\delta) > \text{VaR}_{\alpha}(\bar{q}(K,\delta))\}.
\end{equation}

Although \eqref{problem_2} is convex, it is computationally intensive due to the expectation operator. In \textit{Section \ref{sec_sample}}, we discuss a method to approximate the expectation operator.

\begin{remark}
{\color{black} We use Definition \ref{def_Var} to formulate \eqref{problem_2}. Thus \eqref{problem_2} implicitly assumes that the the distribution of $\bar{q}$ has no point masses. However, verifying this conditions is beyond the scope of this paper and is left for future work.} 
\end{remark}


\section{Empirical risk using scenario based approach}\label{sec_sample}
The optimization problem \eqref{problem_2} is computationally intensive since it involves an expectation operator which acts on a continuum of uncertainties $\Omega$. In this section, we provide a method to approximate the expectation operator using the scenario-based approach \cite{ramponi2018expected}. To begin with, let us establish the following:
\begin{assumption}\label{ass_convex}
For any $\delta$, $\bar{q}(\cdot,\delta)$ is a convex function in the design variable $K$. $\hfill\triangleleft$
\end{assumption}

We have shown in \textit{Theorem \ref{lem3}} that \textit{Assumption \ref{ass_convex}} is satisfied. Next, we approximate the expectation operator in \eqref{problem_2} empirically. To do this, let us begin by sampling the uncertainty set $\Omega$ with $N$ samples. Let us consider that $(\delta_1,\dots,\delta_N)$ is a collection of $N$ independent realizations from $\Omega$ and let $\Omega_N \triangleq \{1,\dots,N\}$. Then for any given $K$, and $i \in \Omega_N$, we denote by $\bar{q}_i(K)$, the value attained by $\bar{q}(K,\delta_i)$, and we denote by $\bar{q}_{(i)}(K)$, the $N-i+1^{th}$ order statistic. That is $\bar{q}_{(1)}(K) \geq \bar{q}_{(2)}(K) \geq \dots \geq \bar{q}_{(N)}(K)$. Now we present the first result of the section. 
\begin{lemma}
Let the dimension of the design variable $K$ be $d = n_x^2$. Let $N \geq d$ and $m \triangleq \lceil N(1-\alpha) \rceil$. Then, under \textit{Assumption \ref{ass_convex}}, the solution to \eqref{problem_2} can be obtained empirically by solving the convex optimization problem 
\begin{equation}\label{cvar_emp}
K^{*} = \arg \inf_K\frac{1}{m}\sum_{i=1}^m\bar{q}_{(i)}(K).
\end{equation}
\end{lemma}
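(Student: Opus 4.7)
The plan is to decompose the lemma into two separate claims: (i) the empirical (sample-average) version of $\text{CVaR}_\alpha$ built from the $N$ realizations $\bar{q}_i(K)$ coincides with the mean of the top $m=\lceil N(1-\alpha)\rceil$ order statistics, and (ii) under Assumption \ref{ass_convex}, this empirical objective is a convex function of $K$. Combined with the scenario-based convergence guarantees from \cite{ramponi2018expected}, these two facts jointly imply that \eqref{cvar_emp} is a valid convex empirical approximation of \eqref{problem_2}.

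For step (i), I would invoke the Rockafellar--Uryasev representation of $\text{CVaR}_\alpha(X) = \inf_{\tau}\{\tau + \frac{1}{1-\alpha}\mathbb{E}[(X-\tau)^+]\}$ and apply it to the empirical distribution that places mass $1/N$ on each $\bar{q}_i(K)$. Solving this one-dimensional minimization at the empirical distribution yields the well-known estimator $\frac{1}{m}\sum_{i=1}^m \bar{q}_{(i)}(K)$, provided $m=\lceil N(1-\alpha)\rceil$. This is precisely the form used in the scenario-based CVaR literature and matches the derivation in \cite{ramponi2018expected}, from which the role of the technical condition $N \geq d$ (needed so that the scenario optimum is well-defined in the $d=n_x^2$-dimensional design space) is also inherited.

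For step (ii), the non-trivial observation is that although each $\bar{q}_i(\cdot)$ is convex by Assumption \ref{ass_convex}, the $i$-th order statistic $\bar{q}_{(i)}(K)$ is in general not convex in $K$ (its identity depends on $K$). The key trick is to rewrite the sum of the largest $m$ values as a maximum over all $m$-subsets,
\begin{equation}
\sum_{i=1}^{m}\bar{q}_{(i)}(K) \;=\; \max_{\substack{S\subseteq\{1,\dots,N\} \\ |S|=m}} \;\sum_{i\in S}\bar{q}_i(K).
\end{equation}
Each inner sum is a finite sum of convex functions, hence convex, and the pointwise maximum of $\binom{N}{m}$ convex functions is convex. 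Scaling by $1/m$ preserves convexity, so the objective in \eqref{cvar_emp} is convex in $K$, and the resulting program is a convex optimization problem as claimed.

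The main obstacle I anticipate is step (ii): justifying convexity despite the fact that the order statistics themselves change identity with $K$. The representation as a maximum over $m$-subsets sidesteps this entirely, but I would make this argument explicit since a reader might otherwise worry that reordering of the $\bar{q}_i(K)$ as $K$ varies breaks convexity. Step (i) is essentially a citation of a known result applied to the empirical measure, and the condition $N \geq d$ is carried over directly from the scenario-approach prerequisites in \cite{ramponi2018expected}.
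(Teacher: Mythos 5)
Your proposal is correct in substance but takes a genuinely different, more self-contained route than the paper: the paper's entire proof is a citation, stating that \eqref{cvar_emp} is the empirical version of \eqref{problem_2} as established in equations (3) and (5) of \cite{ramponi2018expected}, whereas you reconstruct that fact from first principles via the Rockafellar--Uryasev variational representation applied to the empirical measure, and then separately argue convexity. Notably, your step (ii) --- rewriting $\sum_{i=1}^{m}\bar{q}_{(i)}(K)$ as $\max_{|S|=m}\sum_{i\in S}\bar{q}_i(K)$ to sidestep the $K$-dependent identity of the order statistics --- is not part of the paper's proof of the lemma at all; the paper makes exactly this move, but only afterwards and informally, when it recasts \eqref{cvar_emp} as the $\binom{N}{m}$-constraint program \eqref{sample_final}. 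So your proof buys a self-contained justification of both the estimator and its convexity, while the paper's buys brevity by outsourcing the estimator to \cite{ramponi2018expected} and deferring the convexity mechanics to the subsequent reformulation. One caveat you should flag in step (i): minimizing $\tau + \frac{1}{N(1-\alpha)}\sum_{i=1}^{N}(\bar{q}_i(K)-\tau)^+$ over $\tau$ yields exactly $\frac{1}{m}\sum_{i=1}^{m}\bar{q}_{(i)}(K)$ only when $N(1-\alpha)$ is an integer (so that $m=N(1-\alpha)$); otherwise the minimum is a weighted average placing fractional weight on $\bar{q}_{(m)}(K)$, namely $\bigl(1-\tfrac{m-1}{N(1-\alpha)}\bigr)\bar{q}_{(m)}(K)+\tfrac{1}{N(1-\alpha)}\sum_{i=1}^{m-1}\bar{q}_{(i)}(K)$, which differs slightly from the top-$m$ average. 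This does not sink the lemma --- \cite{ramponi2018expected} defines the empirical shortfall directly as the top-$m$ average, which is what the paper relies on, and the two estimators are asymptotically equivalent --- but as written your claim that the one-dimensional minimization ``yields'' \eqref{cvar_emp} for general $\alpha$ is slightly too strong and should either assume $N(1-\alpha)\in\mathbb{N}$ or cite the top-$m$ average as the definition of the empirical CVaR, as the paper does.
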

\begin{proof} It was shown that \eqref{cvar_emp} is the empirical version of \eqref{problem_2} in \cite{ramponi2018expected} (See equations $(3)$ and $(5)$ in \cite{ramponi2018expected}).
\end{proof}

The design problem \eqref{cvar_emp} is formulated using the empirical formulation of the risk metric CVaR. If we could order the functions $ \bar{q}_{(i)}(K)$, then the problem of obtaining the optimal controller would be simple. However, $\bar{q}_{(i)}(K)$ is a function of the optimization variable $K$. Hence it is not possible to explicitly define the order without knowing $K$ beforehand. Alternatively, the design problem \eqref{cvar_emp} can be modified such that the controller is optimal to any possible ordering of the functions. To this end, \eqref{cvar_emp} can be recast as
\begin{equation}\label{sample_final}
\begin{aligned}
\arg \inf_{K,y} \quad & y\\
\textrm{s.t.} \quad & \frac{1}{m}\sum_{j=1}^m\bar{q}_{i_j}(K) \leq y,\\
& \forall \text{choice of $m$ indices} \{i_1,\dots,i_m\} \subseteq \Omega_N.
\end{aligned}
\end{equation}

Since $\{i_1,\dots,i_m\}$ is any subset of $\Omega_N$ with cardinality $m$, the optimization problem \eqref{sample_final} has $ {N \choose m}$ constraints. To summarize, \eqref{sample_final} is the empirical reformulation of \eqref{problem_2}.

Using the solution obtained from \eqref{sample_final}, we provide probabilistic {\color{black}out-of-sample} certificate on the random variable $\bar{q}(K,\delta),\delta \in \Omega$. To this end, let us represent the optimal controller of \eqref{sample_final} as $K_N^*$, and let $N\geq m+d$. Let us define {\color{black}the out-of-sample}  \textit{Probability of Shortfall (PS)} as follows:
\begin{definition}[Probability of Shortfall]
PS is defined as
\begin{equation}
    PS(K_N^*) \triangleq \mathbf{P}\{\delta \in \Omega \;|\; \bar{q}(K_N^*,\delta) \geq \bar{q}_{(m+d)}(K_N^*)\}.\tag*{$\triangleleft$} 
\end{equation}
\end{definition}
By ensuring that the $PS(K_N^*)$ is small (say $\epsilon$), one can ensure that the impact proxy under the optimal controller for any {\color{black}new uncertainty} drawn from the set $\Omega$, $\bar{q}(K_N^*,\delta)$, exceeds a predefined valued $\bar{q}_{(m+d)}(K_N^*)$ (also called as the {\color{black}\textit{shortfall threshold}}) with a small probability $\epsilon$. Now we are ready to present the main result in \textit{Theorem \ref{Thm_PS}}.
\begin{theorem}\label{Thm_PS}
{\color{black}It holds that $\mathbf{P}^N\{PS(K_N^*) \leq \epsilon\} \triangleq$
 \begin{equation}
\int_0^{\epsilon}\frac{\Gamma(N+1)}{\Gamma(m+d)\Gamma(N+1-m-d)}p^{m+d-1}(1-p)^{N-m-d}dp,
\end{equation}
where $\Gamma$ is Euler's Gamma function and $\epsilon \in (0,\;1).$}
\end{theorem}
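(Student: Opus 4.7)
The plan is to reduce the statement to a classical order-statistics / scenario-theoretic identity. First, by Assumption \ref{ass_convex}, problem \eqref{sample_final} is a convex program in the $(d+1)$-dimensional decision variable $(K,y)$, with $\binom{N}{m}$ convex constraints indexed by the $m$-subsets of $\Omega_N$. The central structural observation I would establish is that the optimizer $K_N^*$ admits an \emph{effective compression} of size at most $m+d$ from among the $N$ samples: the $m$ contribution arises because only the samples appearing in the top-$m$ ordering under $K_N^*$ enter the unique active constraint of \eqref{sample_final}, while the additional $d$ comes from the Helly-type bound on the number of support constraints for a $d$-dimensional convex program.

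Second, with the effective compression size $k = m+d$ identified, I would invoke the standard scenario-theoretic result (Campi--Garatti, specialized to CVaR in Ramponi~2018 as already cited): for a convex scenario program built on $N$ i.i.d.\ samples and having compression size $k$, the probability that a fresh sample is ``violating'' is exactly Beta$(k,\,N+1-k)$-distributed, provided a suitable non-degeneracy / uniqueness condition holds. Translating the violation event into $\{\bar{q}(K_N^*,\delta) \geq \bar{q}_{(m+d)}(K_N^*)\}$ identifies the integrand in the theorem as the Beta$(m+d,\,N+1-m-d)$ density, whose CDF evaluated at $\epsilon$ is precisely the stated integral
\begin{equation*}
\int_0^{\epsilon}\frac{\Gamma(N+1)}{\Gamma(m+d)\,\Gamma(N+1-m-d)}\,p^{m+d-1}(1-p)^{N-m-d}\,dp.
\end{equation*}

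The main obstacle will be justifying the compression claim rigorously, because \eqref{sample_final} is not in the one-constraint-per-sample template of classical scenario theory; one must show that the composite top-$m$ constraint can be analyzed jointly with the Helly bound on the remaining $d$-dimensional optimality conditions so that the essential support size is exactly $m+d$. Once that identification is in place, the exact Beta distribution follows from the textbook fact that, after transforming the impact values through their common CDF, the $(m+d)$-th order statistic of $N$ i.i.d.\ uniforms on $[0,1]$ is distributed as Beta$(m+d,\,N+1-m-d)$, which matches the integrand and concludes the argument.
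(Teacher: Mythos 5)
Your proposal is correct and takes essentially the same route as the paper: the paper's proof likewise reduces Theorem \ref{Thm_PS} to the expected-shortfall scenario certificate of \cite{ramponi2018expected}, identifying the compression/support size $m+d$ (the $m$ worst-case samples entering the active constraint of \eqref{sample_final} plus $d$ for the dimension of $K$) so that the shortfall probability is Beta$(m+d,\,N+1-m-d)$-distributed, whose CDF is the stated integral. Your non-degeneracy caveat matches the paper's implicit standing assumption (cf.\ its remark that the distribution of $\bar{q}$ has no point masses), so no genuine gap remains.
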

\begin{proof}
See Appendix. 
\end{proof}

\textit{Theorem \ref{Thm_PS}} provides posteriori results on the confidence with which $PS(K_N^*)$ is below a small threshold $\epsilon$. In other words, \textit{Theorem \ref{Thm_PS}} states that, the confidence of the $PS(K_N^*)$ can be evaluated by knowing the dimension of the decision variable ($d$), the number of samples ($N$), and $m$. 

To recall, in this section we proposed an empirical version of \eqref{problem_2} in \eqref{sample_final}. We also provided probabilistic guarantees on the {\color{black}out-of-sample} PS. We conclude this section by providing \textbf{Algorithm \ref{algo1}} which depicts the outline for solving \textit{Problem 1} approximately. In the next section, we depict the efficacy of the proposed algorithm using a numerical example. 
\begin{algorithm}
\SetAlgoLined
\KwResult{$K_N^*,PS$}
\textbf{Initialization}: $N_h, k, d, \Omega, \epsilon$\;
\begin{enumerate}
    \item Choose $N$ such that $N \geq m+d$.
    \item Extract $N$ independent realizations from $\Omega$. 
    \item \color{black}{Build the matrix $\kappa^{-1}, \forall i \in \Omega_N$}\color{black}{}.
    \item Solve the convex optimization problem \eqref{sample_final}.
    \item Given $\epsilon$, evaluate the confidence using \textit{Theorem \ref{Thm_PS}}.
    \item[]
 \end{enumerate}
 \caption{\color{black}{Risk averse design algorithm}}
 \label{algo1}
\end{algorithm}
\section{Numerical example}\label{sec_NE}
In this section, the efficacy of the design algorithm is depicted through a numerical example. Consider the system of the form \eqref{system:CL} where $C_J=C=I_3$, $A, B$ and $L$ are
\[\begin{bmatrix}
2 & 0 & 1\\
1 & a & 0\\
0 & 1 & b
\end{bmatrix}, \begin{bmatrix}
1 & 1 & 0 \\ 0 & 0.3 & 1\\ 0 & 0 & 1
\end{bmatrix}, \text{and} \;\begin{bmatrix}
1.95 &      0  &  1\\
1  &  0.36  &  1\\
0  &  1  & -0.87
\end{bmatrix},\] respectively. Here $a \in [0.5\; 1.5]$ and $b \in [-0.5\; 0.5]$ are uncertain parameters. The observer gain $L$ is designed using pole placement method.
\begin{table}
\centering
\begin{tabular}{||c | c | c ||}
 \hline
    \;\; & \color{black}{CVaR$_{\alpha}(\bar{q}(K,\delta)+0.1||K||_{F}^2)$} & \color{black}{$\bar{q}_{(m+d)}(K_N^*)$} \\
 \hline
 \color{black}{$m=1$} &  \color{black}{20.7160} & \color{black}{16.9069} \\
 \hline
 \color{black}{$m=2$} & \color{black}{20.6436} & \color{black}{16.8910}\\
 \hline
\end{tabular}
\caption{\color{black}{Risk and Shortfall threshold }}
\label{param}
\end{table}

\begin{figure}
    \centering
    \includegraphics[width=8.4cm]{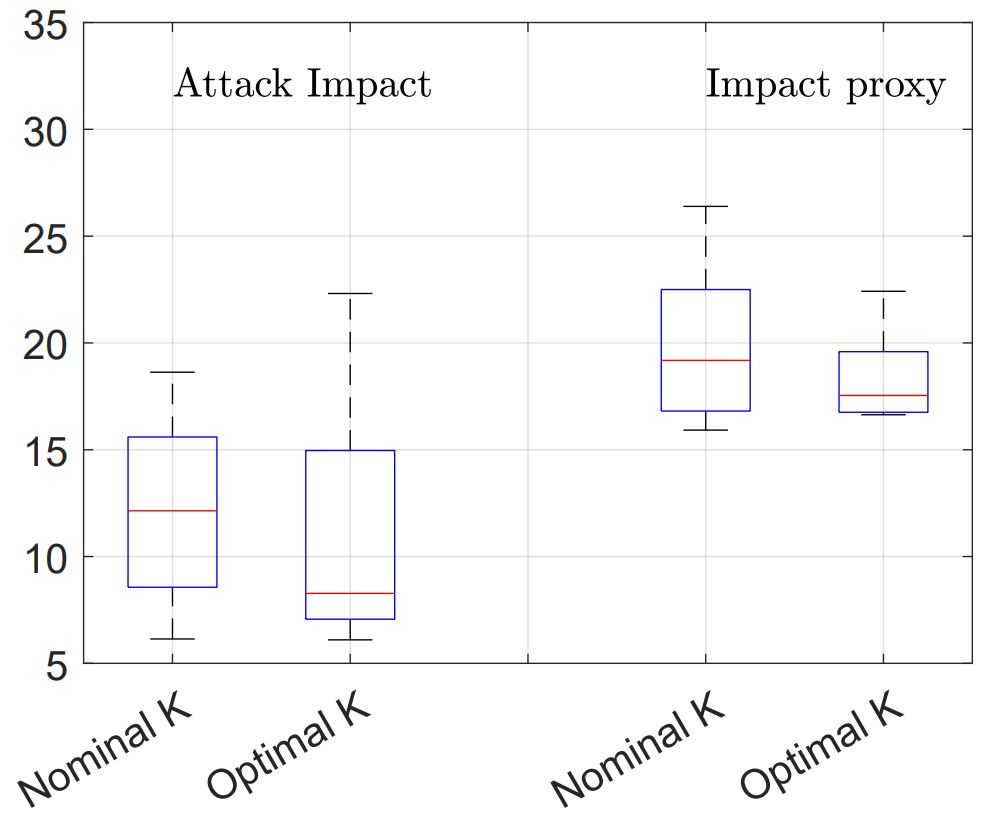}
    \caption{Evaluation of controller performance: The plot depicts the distribution of the attack impact $q(\cdot)$ and the impact proxy $\bar{q}(\cdot)$ for $100$ different uncertainties under the nominal and the optimal controller. The optimal controller is obtained from \textbf{Algorithm \ref{algo1}} by optimizing CVaR$_{0.8}(\bar{q}(\cdot))$ with  parameters $N_h=5, N=11$, and $m=2$. The nominal controller is obtained by optimizing $\bar{q}(K,0)$ for the nominal system. On each box, the central mark indicates the median, and the bottom and top edges of the box indicate the $25^{th}$ and $75^{th}$ percentiles, respectively. The whiskers extend to the most extreme data points.}
    \label{perf_fig}
\end{figure}

Let $N=11,\epsilon_r=1$, and $N_h=5$. The risk and the corresponding shortfall threshold $\bar{q}_{(m+d)}(K_N^*)$ obtained from \textbf{Algorithm \ref{algo1}} for different values of $m$ is shown in TABLE \ref{param}. The corresponding confidence that the the out-of-sample PS is less than or equal to $\epsilon$ can be evaluated from the integral in \textit{Theorem \ref{Thm_PS}}. In reality, the confidence can be made higher by increasing $N$ or decreasing $m$.

Next, we depict the efficiency of the proposed design framework using Fig. \ref{perf_fig}. It can be seen that: (a) compared to the nominal controller, the controller obtained from \textbf{Algorithm 1} lowers the impact proxy $\bar{q}(\cdot)$); (b) The value of attack impact ($q(\cdot)$) is shifted towards a lower median.

\section{Conclusion}\label{sec_conclusion}
The problem of risk-optimal controller design against FDI attacks on actuators of an uncertain control system was studied. We considered an attacker with perfect system knowledge that maximizes the system disruption whilst remaining stealthy. We quantified the system disruption using the OOG. We formulate a design problem where the defender aims at designing a controller such that its CVaR is minimized. We proposed a convex optimization problem that approximately solves the design problem with probabilistic certificates. Finally, we illustrated the results through a numerical example.

\appendix
\renewcommand{\thetheorem}{A.\arabic{theorem}}
\setcounter{theorem}{0}

\section*{Proof of \textit{Lemma \ref{lem2}}}
Before presenting the proof, we provide an intermediate result which helps to construct the proof. 
\begin{lemma}[Inequality of arithmetic and geometric means]\label{lem1}
Given $N$ real numbers $x_1,\dots,x_N$, it holds that $\prod_{i=1}^{N} x_i \leq \big( \frac{\sum_{i=1}^{N}x_i}{N} \big)^{N}$
\end{lemma}
\begin{proof}[Proof of \textit{Lemma \ref{lem2}}]
The convex dual problem of \eqref{e1} can be formulated as \eqref{e2}. Furthermore, it was shown in \cite[\textit{Theorem 3.1}]{cdc2021} that the duality gap is zero.
\begin{equation}\label{e2}
\inf \{ \color{black}{\epsilon_r}\color{black}{}\gamma \;| \;F_p^T(\cdot)F_p(\cdot) - \gamma F_r^T(\cdot)F_r(\cdot) \preceq 0.\}
\end{equation}
Pre-multiplying the constraint of \eqref{e2} by $F_r^{-T}(\delta_j)$ and post-multiplying by  $F_r^{-1}(\delta_j)$, \eqref{e2} can be rewritten as
\begin{equation}\label{e3}
\color{black}{\epsilon_r}\color{black}{}\inf\{\gamma\;|\; \kappa^{T}\kappa \preceq \gamma I.\}
\end{equation}
{\color{black}Since $\epsilon_r$ is a pre-defined constant, it can be moved outside the optimization problem \eqref{e3}.} Using the definition of singular values, \eqref{e3} can be re-written as $\color{black}{\epsilon_r\bar{\sigma}(\kappa)}\color{black}{}$. Thus we have shown that $q(K,\delta_j)= \color{black}{\epsilon_r\bar{\sigma}(\kappa)}\color{black}{}$. Next, we prove that $\bar{\sigma}(\kappa) \leq \mu \bar{q}(K,\delta_j)^{n_xN_h-1}$. To this end, we can show that the matrix $F_{r}(\cdot)$ is a block lower triangular with the element $CB$ in the leading diagonal. Then $F_r(\delta_j)^{-1}$ will be $\begin{bmatrix}
B^{-1}C^{-1} & \dots & 0\\
\vdots & \ddots & \vdots\\
 * & \dots & B^{-1}C^{-1}
\end{bmatrix},$ where $*$ represents that its value is unimportant for now. Then, matrix $\kappa$ will be of the form 
$\begin{bmatrix}
C_JC^{-1}  & \dots & 0\\
\vdots & \ddots & \vdots\\
* & \dots & C_JC^{-1}
\end{bmatrix}.$ Since $\kappa$ is block lower triangular, its determinant is the product of determinant of diagonal blocks \cite[Proof of \textit{Theorem A.1}]{dvijotham2014convex}. Thus $\det(\kappa) = |\det(C_JC^{-1})^{N_h}| \triangleq c$. Since the product of singular values of a matrix is equal to its determinant, we get
\begin{equation}\label{f5}
  \bar{\sigma}(\kappa) = \frac{c}{\prod_{i=1}^{n_xN_h-1}\sigma_i(\kappa)} \stackrel{1}{=}
 c\prod_{i=2}^{n_xN_h}\sigma_i(\kappa^{-1}).  
\end{equation}
The equality $1$ in \eqref{f5} follows since the singular values of $\kappa$ and $\kappa^{-1}$ are reciprocals of each other. By using the result of \textit{Lemma \ref{lem1}}, the term $\epsilon_r\bar{\sigma}(\kappa)$ can be bounded as
\begin{align}
   \epsilon_r\bar{\sigma}(\kappa) &\leq \epsilon_r\vert \det(C_JC)^{N_h}\vert \Big(\frac{\sum_{i=2}^{n_xN_h}\sigma_i(\kappa^{-1})}{n_xN_h-1}\Big)^{n_xN_h-1}\\
    &= \underbrace{\frac{\epsilon_r\vert \det(C_JC)^{N_h}\vert }{(n_xN_h-1)^{n_xN_h-1}}}_{\mu} \Big(\sum_{i=2}^{n_xN_h}\sigma_i(\kappa^{-1})\Big)^{n_xN_h-1}\\
    &\leq \mu \Big(\underbrace{\sum_{i=2}^{n_xN_h}\sigma_i(\kappa^{-1}) + \eta||K||_F^2 }_{\bar{q}(K,\delta_j) }\Big)^{n_xN_h-1} \label{f6}
\end{align}
where the last inequality follows since $\mu$ and $\eta ||K||_F^2$ are non-negative terms. In \eqref{f6}, only the term $\bar{q}(K,\delta_j)$ is dependent on the design variable $K$. This concludes the proof.\end{proof}
\section*{Proof of \textit{Theorem \ref{lem3}}}
\begin{proof}
 Since $\kappa = F_p(\cdot)F_r(\cdot)^{-1}$, it follows that $\kappa^{-1}=$
\begin{equation}
    F_r(K,\delta_j)F_{xa}(K,\delta_j)^{-1}(I_{N_h} \otimes C_J)^{-1}
\end{equation}
\begin{equation}
=(I_{N_h} \otimes C)(F_{ea}(\delta_j)F_{xa}(K,\delta_j)^{-1}+F_{ex}(\delta_j))(I_{N_h} \otimes C_J)^{-1}.
\end{equation}
Here all the matrices are independent of the design variable $K$ except $F_{xa}(K,\delta_j)^{-1}$. It can be verified by  matrix multiplication that $F_{xa}(K,\delta_j)^{-1}$ is of the form
\begin{equation}\label{kinverse}
\begin{bmatrix}
B^{-1} & 0 & 0 & \dots & 0\\
-B^{-1}{A}_{x,j} & B^{-1} & 0 & \dots & 0\\
0 & -B^{-1}{A}_{x,j} & B^{-1} & \dots & 0\\
\vdots & \vdots & \ddots & \vdots\\
0 & 0 & 0 & \dots & B^{-1}
\end{bmatrix}
\end{equation}
The matrix ${A}_{x,j}$ is affine in $K$ and thus the same holds for $\kappa^{-1}$. It can also be shown that the sum of singular values of a matrix, $X \to \sum \sigma(X)$, is convex~\cite{subramani1993sums}. THus we have proven that the term $\sum \sigma(\kappa^{-1})$ is convex in $K$. Since the regularization term $||K||_F^2$ is strictly convex in $K$, we then conclude that $\bar{q}_{i_j}(K,\delta)+\eta||K||_F^2$ is a strictly convex function in $K$ which concludes the proof.
\end{proof}
\section*{Proof of \textit{Theorem \ref{Thm_PS}}}
{\color{black}
Before presenting the proof, we provide an intermediate result \textit{Theorem \ref{thm_app}}: which follows from applying \cite[Theorem 3.1]{ramponi2018expected} to our problem setup. 
\begin{theorem}\label{thm_app}
Let us suppose that $(a)$ a solution to \eqref{sample_final} exists and it is unique almost surely, and $(b)$ for a sample $(\delta_1,\dots,\delta_N)$ of independent realizations from $\Omega$, the event that \{ $\exists\;K | \bar{q}_{i}(K), \forall\; i \in \Omega_N$ has the same value\} has zero probability. Then $\mathbf{P}^N\{PS(K_N^*) \leq \epsilon\} $ has a Beta($m+d,N+1-m-d$) distribution.
\end{theorem}

That is, \textit{Theorem \ref{thm_app}} states that the result of \textit{Theorem \ref{Thm_PS}} follows if $(a)$ and $(b)$ hold: which we prove next. \\
\begin{proof}
\textbf{Proof that $(a)$ holds:} We intend to show that the \eqref{sample_final} has a unique solution. However, as discussed in the paper, \eqref{sample_final} is simply a reformulation of \eqref{cvar_emp}. Thus, we can equivalently show that the solution to \eqref{cvar_emp} is unique. 

In \textit{Theorem \ref{lem3}}, we have shown that the term $\bar{q}_{i_j}(K)$ is strongly convex function in $K$. The uniqueness of the optimal solution, $K_N^*$, then follows from the strict convexity of the objective function. \\
\textbf{Proof that $(b)$ holds:} We prove by contradiction. Let us assume that $\exists K, l$ and independent samples $(\delta_1,\dots,\delta_N)$ such that $\textbf{P}^N\{\bar{q}_{i}(K)=l,i=1,2,\dots,N\} = \theta \geq 0$. Then by definition, $\exists \;\delta_j$ such that $\textbf{P}\{ \delta_j | \bar{q}_{j}(K)=l\} \neq 0$. However this contradicts the assumption that the distribution of $\bar{q}(\cdot)$ has no point masses. This concludes the proof.
\end{proof}}

\bibliographystyle{ieeetr}
\bibliography{ms}

\end{document}